\tikzstyle{vertex}=[circle,thin,draw=black!100,fill=black!100, inner sep=0pt, minimum width=4pt]
\tikzstyle{vertexsm}=[circle,thin,draw=black!100,fill=black!100, inner sep=0pt, minimum width=1.2pt]
\tikzstyle{vertexnm}=[circle,thin,draw=black!100,fill=white!100, inner sep=0pt, minimum width=4pt]
\tikzstyle{vertexg}=[circle, draw=black, inner sep=1pt, style=densely dotted, minimum width=4pt]
\tikzstyle{vertexinf}=[circle,thin,draw=black!100,fill=white!100, inner sep=0pt, minimum width=10pt]
\tikzstyle{pedge}=[draw=black!100,-]
\tikzstyle{nedge}=[draw=black!100,densely dashed]
\tikzstyle{gedge}=[draw=black!100,densely dotted]
\newtheorem{theorem}{Theorem}
\newtheorem{proposition}[theorem]{Proposition}
\newtheorem{observation}[theorem]{Observation}
\begin{document}

\title[Bipartite complements of line graphs and $\lambda_3$]{A connection between the bipartite complements of line graphs and the line graphs with two positive eigenvalues}
\author{Lee Gumbrell}
\address{Department of Mathematics, Royal Holloway, University of
London, Egham Hill, Egham, Surrey, TW20 0EX, England, UK.}
\email{Lee.Gumbrell.2009@rhul.ac.uk}

\subjclass{05C50}
\keywords{line graphs, graph spectra, complements, Courant-Weyl inequalities}

\begin{abstract}
In $1974$ Cvetkovi\'{c} and Simi\'{c} showed which graphs $G$ are the bipartite complements of line graphs. In $2002$ Borovi\'{c}anin showed which line graphs $L\left(H\right)$ have third largest eigenvalue $\lambda_3\leq0$. Our first observation is that two of the graphs Borovi\'{c}anin found are the complements of two of the graphs found by Cvetkovi\'{c} and Simi\'{c}. Using the Courant-Weyl inequalities we show why this is and reprove the result of Borovi\'{c}anin, highlighting some features of the graphs found by both.
\end{abstract}

\maketitle

When related graphs appear for different reasons, it is important to understand why. As usual $L\left(H\right)$ denotes the line graph of a graph $H$, and the eigenvalues of (the adjacency matrix of) a graph on $n$ vertices are $\lambda_1\ge\lambda_2\ge\cdots\ge\lambda_n$. Also, $K_n$, $K_{m,n}$, $C_n$ and $P_n$ denote the complete graph on $n$ vertices, the complete bipartite graph on $m+n$ vertices, and the cycle and path on $n$ vertices, respectively. In $1974$ Cvetkovi\'{c} and Simi\'{c} showed in \cite{CS1974} the following result.

\begin{theorem}[see \cite{CS1974}, Theorem 8]\label{T:cvet+sim}
A graph $G$ is bipartite and the complement of a line graph if and only if $G$ is an induced subgraph of some of the graphs $CS_1$, $CS_2=CS_2\left(n\right)$ (with $n\geq0$) and $CS_3=CS_3\left(m,n,p\right)$ (with $p< n\leq m$; $p\geq0$, $m,n\geq1$) in Figure \ref{F:cvet+sim}.
\end{theorem}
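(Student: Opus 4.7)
The plan is to exploit the duality between $G$ and $L(H)=\overline{G}$, and to study how a line graph can decompose as a disjoint union of two cliques.

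For the easy direction I would use two standard closure properties. First, every induced subgraph of a bipartite graph is bipartite. Second, by Beineke's forbidden-subgraph characterisation, every induced subgraph of a line graph is a line graph. Together these imply that the class ``$G$ bipartite with $\overline{G}$ a line graph'' is closed under taking induced subgraphs. It therefore suffices to verify that each of $CS_1$, $CS_2(n)$, $CS_3(m,n,p)$ is itself bipartite and has line-graph complement, which amounts to exhibiting a bipartition and a root graph $H_i$ with $L(H_i)=\overline{CS_i}$ directly from Figure \ref{F:cvet+sim}.

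For the hard direction, suppose $G$ is bipartite with parts $A$ and $B$ and $G=\overline{L(H)}$. Then $L(H)=\overline{G}$ has vertex set $A\sqcup B$ with each part inducing a clique. The key structural input is the standard Krausz-type observation: any clique in $L(H)$ is either the star of all edges of $H$ incident to a fixed vertex, or (a subset of) the three edges of a triangle of $H$. Applying this to both $A$ and $B$ splits the argument into three cases: (i) both cliques are triangle-type in $H$; (ii) one is star-type and the other triangle-type; (iii) both are star-type, say at vertices $u,v$ of $H$. In case (i) the edge set of $H$ has at most six edges, so $H$ belongs to a finite family and $G$ is an induced subgraph of the fixed graph $CS_1$. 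In case (ii), if $n$ counts the edges of $H$ at the star centre that do not lie on the triangle, then $G$ embeds as an induced subgraph of $CS_2(n)$. In case (iii), every edge of $H$ is incident to $u$ or to $v$, and the three parameters $m,n,p$ record the edges incident only to $u$, only to $v$, and the edges between the remaining neighbourhoods that form triangles through $uv$; translating these back through $L(\cdot)$ and complementation places $G$ inside $CS_3(m,n,p)$.

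The main obstacle will be the bookkeeping in case (iii): one has to convert the \emph{non-edges} of $G$ between $A$ and $B$ into edge non-adjacencies in $H$ and verify that they match exactly the pattern built into $CS_3(m,n,p)$, which is sensitive to whether $uv\in E(H)$ and to how the neighbours of $u$ and $v$ interact. A secondary subtlety is that cliques in $L(H)$ of size $\leq 3$ admit both star and triangle interpretations; this is handled by first extending $A$ and $B$ to maximal cliques of $L(H)$ and choosing the star interpretation whenever the candidate centre has degree at least two outside the triangle, which pins down a canonical representation before the case split begins.
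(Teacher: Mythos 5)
First, a point of comparison: the paper does not prove this statement at all --- it is Theorem 8 of Cvetkovi\'{c} and Simi\'{c} \cite{CS1974}, imported as a black box. So there is no in-paper argument to measure you against; what you have written is a genuine reconstruction, and your overall strategy (pass to $\overline{G}=L(H)$, note that the two sides of the bipartition become two cliques covering $V(L(H))$, hence partition $E(H)$ into two pairwise-intersecting families, each of which is a star or a triangle) is the right one and is essentially the route one would expect the original proof to take. The easy direction is fine, and case (iii) works out exactly as you say, modulo a slip of wording: there are no ``edges between the remaining neighbourhoods'' (such an edge would miss both $u$ and $v$); the parameter $p$ counts the \emph{common neighbours} $|N(u)\cap N(v)|$, and the only remaining dichotomy is whether $uv\in E(H)$, which decides whether the two cliques of $\overline{CS_3}$ share a vertex.

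The concrete gap is in case (i). From ``$H$ has at most six edges'' you conclude that $G$ is an induced subgraph of $CS_1$, but this fails for one of the two configurations. Two edge-disjoint triangles either share exactly one vertex (the bowtie, which does give $\overline{L(H)}=CS_1$) or are vertex-disjoint, in which case $L(H)=2K_3$ and $G=\overline{2K_3}=K_{3,3}$. Since $CS_1$ is a $6$-vertex tree with $5$ edges, $K_{3,3}$ is not an induced subgraph of it; it must instead be routed to $CS_2(3)$ (take $\left\{a,b,c\right\}$ together with three of the vertices joined to all of them) or to $CS_3(4,4,0)$. This is easily patched but as written the step is false. Two smaller issues: (a) your closing device of ``extending $A$ and $B$ to maximal cliques'' is not available, since $A$ and $B$ already partition $V(L(H))$, and the ambiguity it is meant to resolve does not actually arise --- a set of three pairwise-intersecting edges is a star or a triangle but never both, and any set of at most two intersecting edges can simply be declared a star; (b) in case (ii) the sub-case where the star centre lies on the triangle produces $G=K_{1,k}\cup 2K_1$, which does embed in $CS_2(n)$ but not via the ``natural'' correspondence your parameter count suggests, so that sub-case needs its own sentence.
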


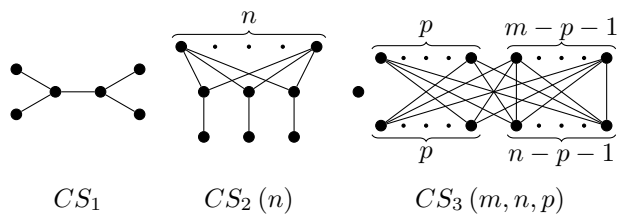
\begin{figure}[h]
\begin{center}
\begin{tabular}{ccc}
\begin{tikzpicture}[scale=0.6, auto] 
\foreach \pos/\name in
{{(0,0)/a},{(0.866,0.5)/b},{(0.866,-0.5)/c},{(-1,0)/aa},{(-1.866,0.5)/ab},{(-1.866,-0.5)/ac}}
\node[vertex] (\name) at \pos {};
\foreach \edgetype/\source/ \dest in {pedge/a/b,pedge/c/a,pedge/aa/ab,pedge/aa/ac,pedge/a/aa}
\path[\edgetype] (\source) -- (\dest);
\foreach \pos/\name in
{{(0,-1.65)/za},{(0,1.9)/zb}}
\node[] (\name) at \pos {};
\end{tikzpicture}
&
\begin{tikzpicture}[scale=0.6, auto] 
\foreach \pos/\name in
{{(0,0)/a},{(1,0)/b},{(-1,0)/c},{(0,-1)/e},{(1,-1)/f},{(-1,-1)/d},{(-1.5,1)/g},{(1.5,1)/h}}
\node[vertex] (\name) at \pos {};
\foreach \edgetype/\source/ \dest in {pedge/a/e,pedge/c/d,pedge/b/f,pedge/g/a,pedge/g/b,pedge/g/c,pedge/h/a,pedge/h/b,pedge/h/c}
\path[\edgetype] (\source) -- (\dest);
\draw[snake=brace] (-1.7,1.2)--(1.7,1.2);
\node[] at (0,1.65) {$n$};
\foreach \pos/\name in
{{(-0.75,1)/aa},{(0,1)/bb},{(0.75,1)/cc}}
\node[vertexsm] (\name) at \pos {};
\foreach \pos/\name in
{{(0,-1.65)/za},{(0,1.9)/zb}}
\node[] (\name) at \pos {};
\end{tikzpicture}
&
\begin{tikzpicture}[scale=0.6, auto] 
\foreach \pos/\name in
{{(-2.5,0.75)/a},{(-0.5,0.75)/b},{(0.5,0.75)/c},{(2.5,0.75)/d},{(-2.5,-0.75)/e},{(-0.5,-0.75)/f},{(0.5,-0.75)/g},{(2.5,-0.75)/h},{(-3,0)/i}}
\node[vertex] (\name) at \pos {};
\foreach \edgetype/\source/ \dest in {pedge/a/f,pedge/a/g,pedge/a/h,pedge/b/e,pedge/b/g,pedge/b/h,pedge/c/e,pedge/c/f,pedge/c/g,pedge/c/h,pedge/d/e,pedge/d/f,pedge/d/g,pedge/d/h}
\path[\edgetype] (\source) -- (\dest);
\draw[snake=brace] (-2.7,0.95)--(-0.3,0.95);
\node[] at (-1.5,1.4) {$p$};
\draw[snake=brace,mirror snake] (-2.7,-0.95)--(-0.3,-0.95);
\node[] at (-1.5,-1.4) {$p$};
\draw[snake=brace] (0.3,0.95)--(2.7,0.95);
\node[] at (1.5,1.4) {$m-p-1$};
\draw[snake=brace,mirror snake] (0.3,-0.95)--(2.7,-0.95);
\node[] at (1.5,-1.4) {$n-p-1$};
\foreach \pos/\name in
{{(-2,0.75)/aa},{(-1.5,0.75)/ab},{(-1,0.75)/ac},{(2,0.75)/ba},{(1.5,0.75)/bb},{(1,0.75)/bc},{(-2,-0.75)/ca},{(-1.5,-0.75)/cb},{(-1,-0.75)/cc},{(2,-0.75)/da},{(1.5,-0.75)/db},{(1,-0.75)/dc}}
\node[vertexsm] (\name) at \pos {};
\foreach \pos/\name in
{{(0,-1.65)/za},{(0,1.9)/zb}}
\node[] (\name) at \pos {};
\end{tikzpicture}
\\
$CS_1$ & $CS_2\left(n\right)$ & $CS_3\left(m,n,p\right)$
\end{tabular}
\caption{The three graphs from Theorem \ref{T:cvet+sim}.}
\label{F:cvet+sim}
\end{center}
\end{figure}

Later, Borovi\'{c}anin proved the following result.

\begin{theorem}[see \cite{B2002}, Theorem 3]\label{T:borov}
A connected line graph $L\left(H\right)$ has $\lambda_3\leq0$ if and only if $L\left(H\right)$ is an induced subgraph of some of the graphs $B_1$, $B_2$, $B_3=B_3\left(n\right)$ (with $n\geq0$) and $B_4=B_4\left(m,n,p\right)$ (with $p< n\leq m$; $p\geq0$, $m,n\geq1$) in Figure \ref{F:borov}.
\end{theorem}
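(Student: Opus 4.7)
The strategy is to translate the hypothesis $\lambda_3(L(H))\leq 0$ into a structural statement about the complement $\overline{L(H)}$ via the Courant--Weyl inequalities, and then to reduce to Theorem~\ref{T:cvet+sim}.

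For the easy direction, I would verify by direct spectral computation (via equitable partitions or explicit quotient matrices) that $\lambda_3\leq 0$ holds for each of $B_1$, $B_2$, $B_3(n)$ and $B_4(m,n,p)$. Since Cauchy interlacing preserves the condition $\lambda_3\leq 0$ under taking induced subgraphs, every graph on the list then inherits the conclusion.

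For the converse, let $G=L(H)$ be a connected line graph on $n$ vertices with $\lambda_3(G)\leq 0$. Writing $A(G)+A(\overline{G})=J-I$ and applying the Courant--Weyl inequality with indices $i$ and $n-i+1$ gives
\[
\lambda_i(G)+\lambda_{n-i+1}(\overline{G})\geq\lambda_n(J-I)=-1
\]
for every $i$. Setting $i=3$ produces the key bound $\lambda_{n-2}(\overline{G})\geq -1$, so $\overline{G}$ has at most two eigenvalues strictly below $-1$. I would then argue that this spectral restriction, combined with the facts that $G$ is a line graph (in particular $\lambda_n(G)\geq -2$) and that $\overline{G}$ is by definition the complement of a line graph, forces $\overline{G}$ to be bipartite. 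Once bipartiteness is established, Theorem~\ref{T:cvet+sim} realises $\overline{G}$ as an induced subgraph of one of $CS_1$, $CS_2(n)$ or $CS_3(m,n,p)$, and passing back to complements places $L(H)$ inside one of $\overline{CS_1}$, $\overline{CS_2(n)}$ or $\overline{CS_3(m,n,p)}$.

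Finally I would match these three graphs against the four families in Figure~\ref{F:borov}: I anticipate $B_3(n)=\overline{CS_2(n)}$ and $B_4(m,n,p)=\overline{CS_3(m,n,p)}$ (matching the coincidence flagged in the abstract), while $B_1$ and $B_2$ should emerge as the maximal line graphs induced inside $\overline{CS_1}$. The main obstacle will be establishing that $\overline{G}$ is bipartite: an odd cycle in $\overline{G}$ corresponds to a cyclic sequence of pairwise non-adjacent edges of $H$, and excluding every such configuration will require combining the Courant--Weyl bound with a small case analysis of the minimal line graphs that violate $\lambda_3\leq 0$.
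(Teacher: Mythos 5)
Your easy direction is fine in outline: interlacing reduces it to checking the four maximal graphs, and although the paper handles the infinite families $B_3(n)$ and $B_4(m,n,p)$ more slickly (via the observation that they are complements of the bipartite graphs $CS_2$, $CS_3$ together with a Courant--Weyl argument, Proposition \ref{P:lee}), a direct computation with quotient matrices would also work. Your Courant--Weyl step in the converse is also correct as stated: taking $i=n$, $j=n-2$ in the first inequality of Theorem \ref{T:courantweyl} with $A+B=K_n$ does give $\lambda_3(G)+\lambda_{n-2}(\overline{G})\geq -1$, hence $\lambda_{n-2}(\overline{G})\geq -1$.

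The fatal gap is the claim that this forces $\overline{L(H)}$ to be bipartite. That is false, and the counterexamples are exactly $B_1$ and $B_2$: both are connected line graphs with $\lambda_3\leq 0$, yet neither has a bipartite complement. Indeed, Observation \ref{C:leecor} records $B_2\supset\overline{CS_1}$ as a \emph{strict} containment ($\overline{CS_1}$ has $6$ vertices, while $B_1$ has $7$ and $B_2$ has $9$), so your anticipated identification of $B_1$ and $B_2$ as induced subgraphs of $\overline{CS_1}$ is backwards and impossible on vertex counts alone; if their complements were bipartite they would have to appear inside the Cvetkovi\'{c}--Simi\'{c} list, and they do not. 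The paper notes explicitly that there are $19$ connected line graphs with $\lambda_3\leq0$ whose complements are non-bipartite, so no spectral bound can exclude this case. What the paper actually does in the non-bipartite branch is weaker and different: it shows only that the shortest odd cycle in $\overline{L(H)}$ must be a triangle (because $\overline{C_5}=C_5$ has $\lambda_3>0$, and for odd $n\geq7$ the graph $\overline{C_n}$ has least eigenvalue below $-2$ and so cannot sit inside a line graph by Theorem \ref{T:linegraph}(ii)), concludes that $L(H)$ contains an induced $3K_1$, and then runs a finite exhaustive search, growing line graphs containing an induced $3K_1$ clique by clique and discarding anything with $\lambda_3>0$; this search terminates by $12$ vertices and is where $B_1$ and $B_2$ emerge. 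Your proposal has no substitute for this finite case analysis, and the "small case analysis" you defer to at the end is aimed at proving a statement that is not true.
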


\begin{figure}[h]
\begin{center}
\begin{tabular}{cccc}
\begin{tikzpicture}[scale=0.6, auto] 
\foreach \pos/\name in
{{(0,0)/a},{(-0.707,0.707)/b},{(0.707,0.707)/c},{(0.707,-0.707)/d},{(-0.707,-0.707)/e},{(-1.414,0)/f},{(1.414,0)/g}}
\node[vertex] (\name) at \pos {};
\foreach \edgetype/\source/ \dest in {pedge/a/b,pedge/a/c,pedge/a/d,pedge/a/e,pedge/b/c,pedge/b/e,pedge/b/f,pedge/c/d,pedge/c/g,pedge/d/e,pedge/d/g,pedge/e/f}
\path[\edgetype] (\source) -- (\dest);
\foreach \pos/\name in
{{(0,-1.65)/za},{(0,1.4)/zb}}
\node[] (\name) at \pos {};
\end{tikzpicture}
&
\begin{tikzpicture}[scale=0.6, auto] 
\foreach \pos/\name in
{{(0.688,0.5)/a},{(-0.261,0.809)/b},{(-0.851,0)/c},{(-0.261,-0.809)/d},{(0.688,-0.5)/e},{(1.554,0)/f},{(-1.561,0.809)/g},{(-2.151,0)/h},{(-1.561,-0.809)/i}}
\node[vertex] (\name) at \pos {};
\foreach \edgetype/\source/ \dest in {pedge/a/b,pedge/a/c,pedge/a/d,pedge/a/e,pedge/b/c,pedge/b/d,pedge/b/e,pedge/c/d,pedge/c/e,pedge/d/e,pedge/a/f,pedge/b/g,pedge/b/h,pedge/g/h,pedge/c/g,pedge/c/i,pedge/g/i,pedge/d/h,pedge/d/i,pedge/h/i,pedge/f/e}
\path[\edgetype] (\source) -- (\dest);
\foreach \pos/\name in
{{(0,-1.65)/za},{(0,1.4)/zb}}
\node[] (\name) at \pos {};
\end{tikzpicture}
&
\begin{tikzpicture}[scale=0.6, auto] 
\foreach \pos/\name in
{{(0,0)/a},{(0.32,0.809)/b},{(0.32,-0.809)/d},{(-0.710,0.809)/g},{(-1.3,0)/h},{(-0.710,-0.809)/i}}
\node[vertex] (\name) at \pos {};
\foreach \edgetype/\source/ \dest in {pedge/b/g,pedge/b/h,pedge/g/h,pedge/a/g,pedge/a/i,pedge/g/i,pedge/d/h,pedge/d/i,pedge/h/i}
\path[\edgetype] (\source) -- (\dest);
\draw (2.4,0) arc (0:360:1.2cm);
\node[] at (1.2,0) {$K_{n+3}$};
\foreach \pos/\name in
{{(0,-1.65)/za},{(0,1.4)/zb}}
\node[] (\name) at \pos {};
\end{tikzpicture}
&
\begin{tikzpicture}[scale=0.6, auto] 
\foreach \pos/\name in
{{(0,0)/a},{(-0.245,-0.7)/b},{(0.245,-0.7)/c},{(-1.2,-1.2)/d},{(1.2,-1.2)/e}}
\node[vertex] (\name) at \pos {};
\draw (0,0) arc (0:360:1.2cm);
\draw (2.4,0) arc (0:360:1.2cm);
\draw (-0.245,-0.7) arc (240:300:0.49cm);
\draw (-1.2,-1.2) arc (240:300:2.4cm);
\node[] at (-1.2,0) {$K_m$};
\node[] at (1.2,0) {$K_n$};
\node[] at (0,-1) {$\vdots$};
\node[] at (0.29,-1.19) {$p$};
\foreach \pos/\name in
{{(0,-1.65)/za},{(0,1.4)/zb}}
\node[] (\name) at \pos {};
\draw (0.58,-.83) node[rotate=61]{$\vdots$};
\draw (-0.58,-.83) node[rotate=-61]{$\vdots$};
\end{tikzpicture}
\\
$B_1$ & $B_2$ & $B_3\left(n\right)$ & $B_4\left(m,n,p\right)$
\end{tabular}
\caption{The four graphs from Theorem \ref{T:borov}. The large circles are used to denote a complete graph of that size.}
\label{F:borov}
\end{center}
\end{figure}
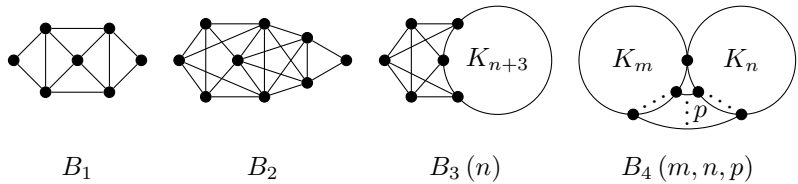

We make the immediate observation from these:

\begin{observation}\label{C:leecor}
For the graphs in Figures \ref{F:cvet+sim} and \ref{F:borov}, $B_3=\overline{CS_2}$, $B_4=\overline{CS_3}$ and $B_2 \supset \overline{CS_1}$.
\end{observation}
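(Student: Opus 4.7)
Since the observation asserts three concrete comparisons among the graphs drawn in Figures \ref{F:cvet+sim} and \ref{F:borov}, the plan is a direct verification, handling each claim in turn.

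For both $B_3=\overline{CS_2}$ and $B_4=\overline{CS_3}$, I would describe the adjacencies of $CS_2(n)$ (resp.\ $CS_3(m,n,p)$) in a form that makes complementation transparent. For $CS_2(n)$: an independent triple $\{a,b,c\}$ with one pendant each, together with $n$ ``top'' vertices joined completely to $\{a,b,c\}$ and to nothing else. Complementing produces a triangle on $\{a,b,c\}$, a $K_{n+3}$ on the pendants together with the former top vertices, and each of $a,b,c$ joined to exactly the two pendants to which it was \emph{not} matched. This is precisely how $B_3(n)$ is drawn in Figure \ref{F:borov} — an outer triangle each of whose vertices meets two of three distinguished vertices of a $K_{n+3}$ — and one reads off an explicit isomorphism by lining up the matchings. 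For $CS_3(m,n,p)$: write its bipartition in terms of the $p$, $m{-}p{-}1$, $n{-}p{-}1$ sub-blocks together with the isolated vertex, complement, and match against $B_4(m,n,p)$; the complement should consist of two cliques of sizes $m$ and $n$ joined by exactly $p$ cross-edges, as drawn.

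For $B_2 \supset \overline{CS_1}$, since $CS_1$ has only six vertices I would write $\overline{CS_1}$ out by hand — it is a $K_4$ together with two mutually non-adjacent vertices, each adjacent to an opposite pair of the $K_4$ — and exhibit an induced copy inside $B_2$ by selecting four vertices of its $K_5$-core together with two well-chosen outer vertices (for instance the degree-two vertex attached to two of the $K_5$ plus one of the triangle vertices attached to the other two). The containment is strict since $B_2$ has nine vertices while $\overline{CS_1}$ has six.

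The only obstacle is clerical: for each comparison one must choose the correct bijection between the vertex labels of Figures \ref{F:cvet+sim} and \ref{F:borov} before the edge sets align. Nothing conceptually deep is required, which is presumably why the result is stated as an observation rather than a theorem; its interest lies not in its proof but in the pattern it exposes, which the Courant--Weyl argument to follow will then explain.
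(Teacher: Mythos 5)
Your direct verification is correct and is essentially the paper's own (implicit) argument: the statement is left as an unproved Observation to be read off from the figures, and your complementation of $CS_2$ and $CS_3$ and your explicit six-vertex induced copy of $\overline{CS_1}$ inside $B_2$ (four vertices of the $K_5$ together with the degree-two pendant vertex and one suitably chosen triangle vertex) all check out. The one detail worth stating explicitly in the $CS_3$ case is that the isolated vertex of $CS_3\left(m,n,p\right)$ becomes, under complementation, the single vertex shared by the cliques $K_m$ and $K_n$, so $\overline{CS_3}$ is two cliques overlapping in one vertex plus a $p$-edge matching between them --- which is exactly $B_4\left(m,n,p\right)$ as drawn in Figure \ref{F:borov}, rather than two disjoint cliques.
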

 
In this paper we explore why these graphs are related and in doing so we offer a new proof of Theorem \ref{T:borov}. Two important tools in this work are the Courant-Weyl inequalities and interlacing.

\begin{theorem}[Courant-Weyl inequalities, see \cite{PI1994}, Theorem 34.2.1] \label{T:courantweyl}
For two Hermitian $n\times n$ matrices $A$ and $B$ we have
\begin{center}
\begin{tabular}{ccccc}
$\lambda_i\left(A + B\right)$ &$\leq$& $\lambda_{i-j+1}\left(A\right) + \lambda_j\left(B\right)$ & &   $\left(i \geq j\right)$,\\
$\lambda_i\left(A + B\right)$ &$\geq$& $\lambda_{i-j+n}\left(A\right) + \lambda_j\left(B\right)$ & &  $\left(i \leq j\right)$.
\end{tabular}
\end{center}
\end{theorem}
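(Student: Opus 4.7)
The plan is to reduce both inequalities to the Courant--Fischer variational characterization of the eigenvalues of a Hermitian matrix $M$:
$$\lambda_k(M)=\max_{\dim W=k}\,\min_{x\in W,\,\|x\|=1}x^{*}Mx=\min_{\dim W=n-k+1}\,\max_{x\in W,\,\|x\|=1}x^{*}Mx.$$
The core idea is to exhibit a subspace of the correct dimension on which the quadratic form $x^{*}(A+B)x=x^{*}Ax+x^{*}Bx$ satisfies the desired bound; the subspace will be built as the intersection of two eigenspace-spans, one adapted to $A$ and one to $B$, with dimensions chosen so that the standard intersection estimate $\dim(U\cap V)\geq\dim U+\dim V-n$ yields exactly the required size.

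For the upper inequality, where $i\geq j$, let $U$ be the span of the eigenvectors of $A$ corresponding to $\lambda_{i-j+1}(A),\lambda_{i-j+2}(A),\ldots,\lambda_n(A)$, so $\dim U=n-i+j$ and $x^{*}Ax\leq\lambda_{i-j+1}(A)\|x\|^{2}$ for $x\in U$. Let $V$ be the span of the eigenvectors of $B$ corresponding to $\lambda_{j}(B),\lambda_{j+1}(B),\ldots,\lambda_n(B)$, so $\dim V=n-j+1$ and $x^{*}Bx\leq\lambda_{j}(B)\|x\|^{2}$ for $x\in V$. Then $U\cap V$ has dimension at least $(n-i+j)+(n-j+1)-n=n-i+1$, so it contains a subspace $W$ with $\dim W=n-i+1$ on which every unit vector $x$ satisfies $x^{*}(A+B)x\leq\lambda_{i-j+1}(A)+\lambda_{j}(B)$. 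The min--max form of Courant--Fischer applied to $A+B$ then delivers the bound.

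The lower inequality is proved by the dual argument: with $i\leq j$, take $U'$ spanned by the eigenvectors of $A$ for $\lambda_1(A),\ldots,\lambda_{i-j+n}(A)$ (of dimension $i-j+n\leq n$) and $V'$ spanned by the eigenvectors of $B$ for $\lambda_1(B),\ldots,\lambda_j(B)$ (of dimension $j$); their intersection has dimension at least $i$, and on it $x^{*}Ax\geq\lambda_{i-j+n}(A)\|x\|^{2}$ and $x^{*}Bx\geq\lambda_{j}(B)\|x\|^{2}$, so the max--min form yields $\lambda_i(A+B)\geq\lambda_{i-j+n}(A)+\lambda_j(B)$.

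The only delicate point, and the main obstacle to a clean write-up, is the bookkeeping: one must verify in each case that the hypotheses $i\geq j$ or $i\leq j$ make every index and dimension above lie in $\{1,\ldots,n\}$, and that the correct variational characterization (min--max for an upper bound, max--min for a lower bound) is selected so that the inequalities $x^{*}Mx\leq\lambda_k\|x\|^{2}$ or $x^{*}Mx\geq\lambda_k\|x\|^{2}$ on the chosen eigenspace point in the direction that survives the outer optimization over $W$.
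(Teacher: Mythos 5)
Your proposal is correct, but there is nothing in the paper to compare it against: the paper states the Courant--Weyl inequalities as a quoted result, citing Theorem 34.2.1 of \cite{PI1994}, and gives no proof of its own. Your argument is the standard proof that such references use: invoke the Courant--Fischer characterization, take $U$ and $V$ to be spans of suitable tails (or heads) of orthonormal eigenbases of $A$ and $B$, and use $\dim\left(U\cap V\right)\geq\dim U+\dim V-n$ to produce a test subspace of the right dimension for $A+B$. Your bookkeeping checks out: for $i\geq j$ the intersection has dimension at least $\left(n-i+j\right)+\left(n-j+1\right)-n=n-i+1$, which is exactly what the min--max form of $\lambda_i\left(A+B\right)$ requires, and for $i\leq j$ it has dimension at least $\left(i-j+n\right)+j-n=i$, which is what the max--min form requires; the hypotheses $i\geq j$ and $i\leq j$ ensure the indices $i-j+1$ and $i-j+n$ lie in $\left\{1,\ldots,n\right\}$. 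The only cosmetic omission is the routine remark that the eigenvectors must be chosen orthonormal so that the pointwise bounds $x^{*}Ax\leq\lambda_{i-j+1}\left(A\right)\|x\|^{2}$ on $U$ and $x^{*}Bx\leq\lambda_{j}\left(B\right)\|x\|^{2}$ on $V$ (and their reversed counterparts) actually hold.
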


\begin{theorem}[Cauchy \cite{Ca1829}; or see \cite{GR2001}, Theorem 9.1.1] \label{T:interlacing}
Let $G$ be an $n$-vertex graph with vertex set $V\left(G\right)$ and eigenvalues $\lambda_1 \geq \lambda_2 \geq \ldots \geq \lambda_n$. Also, let $H$ be the induced graph on $V\left(G\right)\backslash\left\{v\right\}$ obtained from $G$ by deleting the vertex $v$ and its incident edges.
Then the eigenvalues $\mu_1 \geq \mu_2 \geq \ldots \geq \mu_{n-1}$ of $H$ interlace with those of $G$; that is
\begin{displaymath}
\lambda_1 \geq \mu_1 \geq \lambda_2 \geq \mu_2 \geq \ldots \geq \mu_{n-1} \geq \lambda_n.
\end{displaymath}
\end{theorem}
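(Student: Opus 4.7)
The plan is to derive both interlacing inequalities directly from the Courant--Fischer min-max characterisation
\[
\lambda_k(M) \;=\; \max_{\substack{U \subseteq \mathbb{R}^N \\ \dim U = k}} \; \min_{0 \neq x \in U} \; \frac{x^{T} M x}{x^{T} x}
\]
available for any real symmetric $N \times N$ matrix $M$. Write $A$ for the adjacency matrix of $G$ and $A'$ for the principal submatrix obtained by deleting the row and column indexed by $v$, so that $A'$ is the adjacency matrix of $H$ with eigenvalues $\mu_1 \ge \cdots \ge \mu_{n-1}$.

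The bookkeeping device is to identify $\mathbb{R}^{n-1}$ with the coordinate hyperplane $\Pi = \{x \in \mathbb{R}^n : x_v = 0\}$ via the lift $y \mapsto \hat y$ which inserts a zero into position $v$. This lift satisfies $\hat y^{T} A \hat y = y^{T} A' y$ and $\|\hat y\| = \|y\|$, so Rayleigh quotients of $A$ restricted to $\Pi$ agree with Rayleigh quotients of $A'$ on $\mathbb{R}^{n-1}$, in both directions.

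For the lower bound $\lambda_k \ge \mu_k$, I would take a $k$-dimensional $U' \subseteq \mathbb{R}^{n-1}$ witnessing $\mu_k$ in Courant--Fischer for $A'$, lift it to a $k$-dimensional subspace $\hat U' \subseteq \Pi \subseteq \mathbb{R}^n$, and observe that the minimum Rayleigh quotient of $A$ on $\hat U'$ equals $\mu_k$; the max-min formula for $\lambda_k(A)$ then gives $\lambda_k \ge \mu_k$. For the upper bound $\mu_k \ge \lambda_{k+1}$, I would instead take a $(k+1)$-dimensional $W \subseteq \mathbb{R}^n$ witnessing $\lambda_{k+1}(A)$, so that $x^{T} A x \ge \lambda_{k+1} \|x\|^{2}$ for all $x \in W$. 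Since $\Pi$ has codimension one, the rank--nullity theorem applied to the linear map $x \mapsto x_v$ on $W$ gives $\dim(W \cap \Pi) \ge k$; pulling any $k$-dimensional subspace of $W \cap \Pi$ back to $\mathbb{R}^{n-1}$ produces a $k$-dimensional $U \subseteq \mathbb{R}^{n-1}$ on which $y^{T} A' y \ge \lambda_{k+1} \|y\|^{2}$, so the max-min formula for $\mu_k(A')$ yields $\mu_k \ge \lambda_{k+1}$.

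There is no substantial obstacle once the lifting is set up correctly; the only step with genuine content is the dimension count $\dim(W \cap \Pi) \ge \dim W - 1$ used in the upper bound, which is immediate from rank--nullity. Assembling the two chains of inequalities for $k = 1, \ldots, n-1$ produces the claimed interlacing chain $\lambda_1 \ge \mu_1 \ge \lambda_2 \ge \cdots \ge \mu_{n-1} \ge \lambda_n$.
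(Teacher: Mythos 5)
This statement is one the paper does not prove at all: it is quoted as a classical result (Cauchy 1829; Godsil--Royle, Theorem 9.1.1) and used as a black box, so there is no internal proof to compare against. Your argument is correct and complete, and it is the standard textbook derivation of Cauchy interlacing from the Courant--Fischer max-min characterisation. Both halves are sound: for $\lambda_k \ge \mu_k$ the lift of a witnessing $k$-dimensional subspace into the hyperplane $\Pi = \{x : x_v = 0\}$ preserves Rayleigh quotients exactly because deleting row and column $v$ of $A$ does not affect $x^T A x$ when $x_v = 0$; for $\mu_k \ge \lambda_{k+1}$ the only nontrivial ingredient is the codimension count $\dim(W \cap \Pi) \ge \dim W - 1$, which you correctly justify by rank--nullity applied to the coordinate functional $x \mapsto x_v$. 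One small point worth making explicit if you write this up formally: the witnessing subspaces exist because the max in Courant--Fischer is attained (take the span of the top $k$, respectively top $k+1$, eigenvectors), but this is standard and does not affect the validity of the argument. Chaining $\lambda_k \ge \mu_k \ge \lambda_{k+1}$ over $k = 1, \ldots, n-1$ indeed yields the full interlacing chain, and the argument generalises immediately to deleting several rows and columns, which is more than the paper needs.
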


We also need a couple of results about line graphs. Note that the graphs in Figure \ref{F:borov} are drawn in such a way that the structure given in Theorem \ref{T:linegraph} (i) is fairly easy to spot.

\begin{theorem} \label{T:linegraph}
\begin{enumerate}[(i)]
	\item \textup{(\cite{Ha1969}, Theorem 8.4)} A graph is a line graph if and only if its edges can be partitioned in such a way that every edge is in one clique and no vertex is in more than two cliques.
	\item \textup{(\cite{GR2001}, Lemma 8.6.2)} Line graphs have all their eigenvalues in the interval $\left[-2,\infty\right)$.
\end{enumerate}
\end{theorem}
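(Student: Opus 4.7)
The two parts of the theorem call for rather different approaches, and I would handle them independently.

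For (i), the plan is to prove both directions of the equivalence directly. For the forward direction, assume $G = L(H)$. For each vertex $v$ of $H$, the edges of $H$ incident to $v$ form a clique in $L(H)$; taking these over all $v \in V(H)$ yields a partition of $E(L(H))$. Each vertex of $L(H)$, being an edge of $H$ with exactly two endpoints, lies in at most two of these cliques, and each edge of $L(H)$ corresponds to a pair of edges of $H$ sharing precisely one vertex, so it lies in exactly one clique. For the converse, given a valid clique partition of $G$, construct $H$ by taking one vertex for each clique in the partition, together with an extra pendant vertex for every vertex of $G$ that lies in only one clique of the partition (or in none). A vertex of $G$ that lies in two cliques then corresponds to the edge of $H$ joining those two clique-vertices, and $G = L(H)$ follows by checking adjacency: two vertices of $G$ are adjacent if and only if they share a clique, which happens if and only if the corresponding edges of $H$ share an endpoint.

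For (ii), the cleanest route is via the vertex--edge incidence matrix. Let $B$ be the $|V(H)| \times |E(H)|$ matrix of $0$'s and $1$'s with $B_{ve} = 1$ if and only if $v$ is an endpoint of $e$. A direct computation gives
\[
B^{T} B \;=\; A(L(H)) + 2I,
\]
since $(B^{T}B)_{ee} = 2$ for every edge $e$ (each edge has exactly two endpoints), while $(B^{T}B)_{ef}$ equals the number of common endpoints of $e$ and $f$, which is $1$ precisely when $e$ and $f$ are adjacent in $L(H)$. Because $B^{T}B$ is positive semidefinite, all of its eigenvalues are nonnegative, so every eigenvalue of $A(L(H)) = B^{T}B - 2I$ is at least $-2$.

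The only genuine subtlety I anticipate is in the converse direction of (i): the clique partition may leave some vertices of $G$ lying in only a single clique, and one must add auxiliary pendant vertices to $H$ so that every vertex of $G$ actually corresponds to an edge of $H$. Apart from this bookkeeping, the verification is routine, and (ii) is a one-line consequence of the incidence-matrix identity once that identity is established.
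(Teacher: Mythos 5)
The paper does not prove this theorem; it is quoted as a pair of known results, with part (i) cited to Harary (Krausz's characterization) and part (ii) to Godsil--Royle. Your proofs are correct and are in fact the standard arguments from those sources: the clique partition induced by the vertices of $H$ and its reversal for (i), and the identity $B^{T}B = A(L(H)) + 2I$ with positive semidefiniteness of $B^{T}B$ for (ii). The only point worth tightening in (i) is the degenerate case of an isolated vertex of $G$, which lies in no clique of an \emph{edge} partition and therefore needs a whole $K_2$ component in $H$ rather than a single pendant vertex; you also implicitly use that no two vertices of $G$ share two cliques (which follows since they would then be adjacent and their common edge would lie in two cliques, contradicting the partition hypothesis). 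These are exactly the bookkeeping issues you flag, and they do not affect the substance of the argument.
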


Cvetkovi\'{c} showed the following result using the second of the Courant-Weyl inequalities and Theorem \ref{T:linegraph}(ii). 

\begin{theorem}[see \cite{C1982}, Theorem 2] \label{T:cvetcw}
If $G$ is the complement of a line graph then $\lambda_2\left(G\right)\leq 1$.
\end{theorem}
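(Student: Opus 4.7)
The plan is to exploit the identity $A(G) + A(L(H)) = J - I$, valid whenever $G = \overline{L(H)}$, and apply the second Courant–Weyl inequality to the sum $A(L(H)) + A(G) = J - I$. This reduces the theorem to a one-line calculation once the right indices are chosen.

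First I would record that the eigenvalues of the $n \times n$ matrix $J - I$ are $n-1$ (simple) and $-1$ (with multiplicity $n-1$), so $\lambda_2(J - I) = -1$. Then, writing the line graph's adjacency matrix as $A(L(H))$ and setting $A = A(L(H))$, $B = A(G)$, I would apply the second Courant–Weyl inequality from Theorem \ref{T:courantweyl} with $i = j = 2$:
\begin{equation*}
\lambda_2\bigl(A(L(H)) + A(G)\bigr) \;\geq\; \lambda_{n}\bigl(A(L(H))\bigr) + \lambda_2\bigl(A(G)\bigr).
\end{equation*}
The left-hand side equals $\lambda_2(J-I) = -1$, and by Theorem \ref{T:linegraph}(ii) we have $\lambda_n(A(L(H))) \geq -2$. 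Substituting,
\begin{equation*}
-1 \;\geq\; -2 + \lambda_2(G),
\end{equation*}
which rearranges to $\lambda_2(G) \leq 1$.

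There is essentially no obstacle here beyond picking the correct pairing of indices in Courant–Weyl; the only subtlety worth flagging in the write-up is that the bound $\lambda_n(L(H)) \geq -2$ is invoked for the smallest eigenvalue of the summand $A(L(H))$, which is why the index $n$ appears on the right-hand side of the Courant–Weyl inequality. The argument also shows where slack can arise: equality would require both $\lambda_n(L(H)) = -2$ and saturation of Courant–Weyl, a fact that may be useful later when analysing when $\lambda_2(G) = 1$ for the graphs $CS_1$, $CS_2$, $CS_3$ of Theorem \ref{T:cvet+sim}.
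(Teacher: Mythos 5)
Your proof is correct and uses precisely the method the paper attributes to Cvetkovi\'{c}: the second Courant--Weyl inequality applied to $A(L(H)) + A(G) = J - I$ with $i=j=2$, combined with the bound $\lambda_n(L(H)) \geq -2$ from Theorem \ref{T:linegraph}(ii). The index bookkeeping ($i-j+n = n$) and the spectrum of $K_n$ are both handled correctly, so there is nothing to add.
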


Using the same method and the symmetry of the eigenvalues of a bipartite graph we show that there can be even more structure in the spectrum of a line graph when it has a bipartite complement.

\begin{proposition}\label{P:lee}
If a line graph $L\left(H\right)$ has a bipartite complement, then $\lambda_3\left(L\left(H\right)\right)\leq0$.
\end{proposition}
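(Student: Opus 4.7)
The plan is to mimic Cvetkovi\'{c}'s proof of Theorem \ref{T:cvetcw} but extract one additional index of information from the bipartiteness hypothesis. Write $G = L(H)$ and let $\overline{G}$ denote its (bipartite) complement. The starting identity is the usual
\begin{equation*}
A(G) + A(\overline{G}) = J - I,
\end{equation*}
so $A(G) = (J - I) + \bigl(-A(\overline{G})\bigr)$. The spectrum of $J - I$ is $\{n-1, -1, -1, \dots, -1\}$, and in particular $\lambda_2(J - I) = -1$. This is the expression to which I intend to apply the Courant-Weyl inequalities.

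Next I would apply the first inequality of Theorem \ref{T:courantweyl} with $i = 3$ and $j = 2$ to the decomposition above. Since the eigenvalues of $-A(\overline{G})$ in decreasing order are $-\lambda_n(\overline{G}) \geq \cdots \geq -\lambda_1(\overline{G})$, we get
\begin{equation*}
\lambda_3(G) \;\leq\; \lambda_2(J - I) + \bigl(-\lambda_{n-1}(\overline{G})\bigr) \;=\; -1 - \lambda_{n-1}(\overline{G}).
\end{equation*}
So the task reduces to showing $\lambda_{n-1}(\overline{G}) \geq -1$, i.e.\ the second-smallest eigenvalue of $\overline{G}$ is at least $-1$.

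Here is where both remaining hypotheses enter. Because $\overline{G}$ is bipartite, its spectrum is symmetric about $0$, so $\lambda_{n-1}(\overline{G}) = -\lambda_2(\overline{G})$. Because $\overline{G}$ is the complement of the line graph $G = L(H)$, Cvetkovi\'{c}'s bound (Theorem \ref{T:cvetcw}) gives $\lambda_2(\overline{G}) \leq 1$, and hence $\lambda_{n-1}(\overline{G}) \geq -1$. Plugging this into the Courant-Weyl bound yields $\lambda_3(G) \leq -1 - (-1) = 0$, as required.

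The only genuine step to get right is the choice of indices $(i,j)=(3,2)$ in Courant-Weyl, so that the middle eigenvalue $\lambda_2(J-I) = -1$ (rather than the Perron value $n-1$) appears on the right-hand side; everything else is bookkeeping once one notices that bipartite symmetry turns Cvetkovi\'{c}'s upper bound on $\lambda_2(\overline{G})$ into exactly the lower bound on $\lambda_{n-1}(\overline{G})$ that the inequality demands. Theorem \ref{T:linegraph}(ii), which powered the original $\lambda_2 \leq 1$ bound, is used here only implicitly through Theorem \ref{T:cvetcw}.
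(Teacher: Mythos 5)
Your proof is correct and is essentially the paper's own argument: the paper applies the second Courant--Weyl inequality with $i=2$, $j=n-1$ to $A(L(H))+A(\overline{G})=A(K_n)$, which after rearrangement is exactly your application of the first inequality with $(i,j)=(3,2)$ to $A(L(H))=(J-I)-A(\overline{G})$, and both proofs then combine Theorem~\ref{T:cvetcw} with bipartite spectral symmetry to obtain $\lambda_{n-1}(\overline{G})\geq-1$. The two routes differ only in which of the two (negation-equivalent) Courant--Weyl inequalities is invoked.
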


\begin{proof}
Let $G=\overline{L\left(H\right)}$. The spectrum of a bipartite graph is symmetric around $0$, meaning that $\lambda_1=-\lambda_n$, $\lambda_2=-\lambda_{n-1}$, and so forth. This fact and Theorem \ref{T:cvetcw} tell us that if $G$ is bipartite then we also have that $\lambda_{n-1}\left(G\right)\geq-1$.

Take the second of the Courant-Weyl inequalities in Theorem \ref{T:courantweyl} and let $A=L\left(H\right)$ and $B=G$ so that $A+B=K_n$. Also let $i=2$ and $j=n-1$ so that $i-j+n=3$. The spectrum of the complete graph is well known and has $\lambda_i\left(K_n\right)=-1$ for $i=2,\ldots,n$. Bringing these together we get
\begin{eqnarray*}
  \lambda_2\left(K_n\right) &\geq& \lambda_3\left(L\left(H\right)\right) + \lambda_{n-1}\left(G\right) \\
  0 &\geq& \lambda_3\left(L\left(H\right)\right) + \lambda_{n-1}\left(G\right) + 1. 
\end{eqnarray*}
Then for $G$ bipartite we deduce that $\lambda_3\left(L\left(H\right)\right)\leq0$, as required.
\end{proof}

We now prove the two directions of Theorem \ref{T:borov} separately, starting with the reverse.

\begin{proposition}\label{P:borovbackwards}
If a line graph $L\left(H\right)$ is an induced subgraph of one of the graphs $B_1$, $B_2$, $B_3$ or $B_4$, then $\lambda_3\left(L\left(H\right)\right)\leq0$.
\end{proposition}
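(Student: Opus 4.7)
The plan is to reduce, via interlacing, to establishing the bound for each of $B_1$, $B_2$, $B_3$, $B_4$ in turn, and then to dispatch the two parametric families using what we already have while computing the two sporadic cases directly.

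First, since $L(H)$ is an induced subgraph of some $B_i$, iterating Theorem \ref{T:interlacing} one vertex at a time yields $\lambda_3(L(H)) \leq \lambda_3(B_i)$. Hence it suffices to prove $\lambda_3(B_i) \leq 0$ for each $i \in \{1,2,3,4\}$.

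For the parametric families $B_3 = B_3(n)$ and $B_4 = B_4(m,n,p)$, I would apply Observation \ref{C:leecor} to write $B_3 = \overline{CS_2}$ and $B_4 = \overline{CS_3}$. Theorem \ref{T:cvet+sim} tells us that $CS_2$ and $CS_3$ are bipartite, so $B_3$ and $B_4$ are line graphs with bipartite complements, and Proposition \ref{P:lee} immediately gives $\lambda_3(B_3), \lambda_3(B_4) \leq 0$. Note that this argument is uniform in the parameters, which is what we need.

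That leaves $B_1$ (on $7$ vertices) and $B_2$ (on $9$ vertices). A quick inspection of their complements shows that each contains an odd cycle (a $5$-cycle in $\overline{B_1}$ and a triangle in $\overline{B_2}$), so neither complement is bipartite and Proposition \ref{P:lee} is unavailable. These two graphs must therefore be handled by direct computation. Exploiting the Klein four-group of automorphisms visible in each drawing allows one to block-diagonalise the adjacency matrix into pieces of size at most four, whereupon the spectrum can be read off. The main obstacle is precisely this final, ad hoc computation: it is routine but unilluminating, and it gives no intrinsic reason why these two sporadic graphs should appear in the classification alongside the parametric families.
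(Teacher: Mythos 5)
Your proposal is correct and follows essentially the same route as the paper: reduce to the four maximal graphs by interlacing, handle $B_3$ and $B_4$ via Observation \ref{C:leecor} and Proposition \ref{P:lee}, and verify the finite graphs $B_1$ and $B_2$ by direct computation. The extra remarks about automorphisms and the odd cycles in $\overline{B_1}$ and $\overline{B_2}$ are fine but inessential embellishments of the same argument.
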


\begin{proof}
The graphs $B_1$ and $B_2$ have only a finite number of vertices so we can easily find their eigenvalues and see that $\lambda_3\leq0$. Any subgraphs will also then have $\lambda_3\leq0$ by interlacing (Theorem \ref{T:interlacing}). Since $CS_2$ and $CS_3$ are bipartite and the complements of line graphs, Observation \ref{C:leecor} and Proposition \ref{P:lee} tell us that we must then have $\lambda_3\left(B_3\right)\leq0$ and $\lambda_3\left(B_4\right)\leq0$.
\end{proof}

\begin{proposition}\label{P:borovforwards}
If a connected line graph $L\left(H\right)$ has $\lambda_3\leq0$ then it is an induced subgraph of some of the graphs $B_1$, $B_2$, $B_3$ and $B_4$.
\end{proposition}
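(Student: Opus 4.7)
\smallskip

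\noindent\textbf{Proof plan.} The plan is to split into two cases according to whether $\overline{L\left(H\right)}$ is bipartite. The bipartite-complement case will be resolved immediately by plugging Theorem~\ref{T:cvet+sim} into Observation~\ref{C:leecor}; the remaining case will be reduced, via the restrictions imposed by Theorem~\ref{T:linegraph}(ii) and interlacing, to a finite enumeration which one checks embeds into $B_1$ or $B_2$.

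For the bipartite-complement case, suppose $\overline{L\left(H\right)}$ is bipartite. Then $\overline{L\left(H\right)}$ is a bipartite graph which is itself the complement of a line graph, so by Theorem~\ref{T:cvet+sim} it is an induced subgraph of one of $CS_1$, $CS_2\left(n\right)$, or $CS_3\left(m,n,p\right)$. Complementation on a fixed vertex set preserves the induced-subgraph relation, so $L\left(H\right)$ is an induced subgraph of $\overline{CS_1}$, of $\overline{CS_2\left(n\right)}$, or of $\overline{CS_3\left(m,n,p\right)}$. By Observation~\ref{C:leecor}, these are induced subgraphs of $B_2$, $B_3\left(n\right)$, and $B_4\left(m,n,p\right)$ respectively, finishing this case.

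For the non-bipartite-complement case, suppose $\overline{L\left(H\right)}$ contains an induced odd cycle $C_{2k+1}$. Then $L\left(H\right)$ contains an induced $\overline{C_{2k+1}}$. A direct computation from the cycle spectrum gives that the eigenvalues of $\overline{C_{2k+1}}$ are $2k-2$ together with $-1-2\cos\!\bigl(2\pi j/\left(2k+1\right)\bigr)$ for $j=1,\dots,2k$; the smallest of the latter, namely $-1-2\cos\!\bigl(2\pi/\left(2k+1\right)\bigr)$, is strictly less than $-2$ for every $k\geq 3$. Since Theorem~\ref{T:linegraph}(ii) combined with interlacing (Theorem~\ref{T:interlacing}) forces every induced subgraph of $L\left(H\right)$ to have smallest eigenvalue at least $-2$, we obtain $k\in\{1,2\}$. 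So $L\left(H\right)$ contains an induced $3K_1$ or an induced $C_5$.

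It then remains to show, by finite case analysis, that under this restriction together with $\lambda_3\left(L\left(H\right)\right)\leq 0$, connectedness, and the clique-partition characterization of Theorem~\ref{T:linegraph}(i), $L\left(H\right)$ embeds as an induced subgraph of $B_1$ or $B_2$. The main obstacle is precisely this enumeration step: one needs to identify small induced subgraphs with $\lambda_3>0$ that, together with the forced $3K_1$ or $C_5$, cap $\lvert V\left(L\left(H\right)\right)\rvert$, and then verify that each surviving connected line graph sits inside $B_1$ or $B_2$. Everything before this step is clean Courant-Weyl / interlacing bookkeeping; the combinatorial finiteness argument in the non-bipartite-complement case is where the work lies.
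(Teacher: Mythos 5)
Your case split and the reduction of the non-bipartite-complement case both match the paper's strategy, and your direct computation of the spectrum of $\overline{C_{2k+1}}$ is a perfectly acceptable (arguably cleaner) substitute for the paper's route via $\lambda_2\left(P_6\right)>1$, interlacing and Courant--Weyl. Two things are missing, one small and one large. The small one: you leave the branch ``$L\left(H\right)$ contains an induced $C_5$'' open, but it can be closed on the spot, since $\overline{C_5}=C_5$ has $\lambda_3=2\cos\left(2\pi/5\right)>0$, so interlacing contradicts $\lambda_3\left(L\left(H\right)\right)\leq0$. Only the $3K_1$ branch survives, and you should say so rather than carry $C_5$ into the case analysis.

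The large one is that the finite enumeration you defer is not a routine afterthought --- it is the bulk of the proof --- and you have supplied neither the mechanism that makes it finite nor the verification that the survivors embed in $B_1$ or $B_2$. The paper does this by growing line graphs outward from the forced induced $3K_1$ using the clique-partition structure of Theorem~\ref{T:linegraph}(i) (a small set of explicit operations: enlarge a clique, grow two non-adjacent cliques so that they share the new vertex, attach a pendant $K_2$, and finally add edges between vertices lying in only one clique each), discarding anything with $\lambda_3>0$ at each step. It then checks that by $12$ vertices every survivor containing an induced $3K_1$ is disconnected (a subgraph of $B_3$ or $B_4$ together with one or two isolated vertices) and cannot be reconnected or grown further without forcing $\lambda_3>0$, while the connected survivors on at most $11$ vertices are exactly $B_1$, $B_2$ and their induced subgraphs. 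Without an argument of this kind --- in particular, without a reason the search terminates and a concrete list of terminal graphs --- the non-bipartite case is asserted rather than proved, so as it stands your proposal is an accurate outline but not a complete proof.
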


\begin{proof}
Let $G$ be the complement of $L\left(H\right)$, then $G$ is certainly either bipartite or non-bipartite. If $G$ is bipartite then it is the bipartite complement of a line graph, so by Theorem \ref{T:cvet+sim} it must be an induced subgraph of $CS_1$, $CS_2$ or $CS_3$. Therefore by Observation \ref{C:leecor} $L\left(H\right)$ must be an induced subgraph of $B_3$, $B_4$ or contained in $B_2$.

If $G$ is non-bipartite we have a bit more work to do. In this case we know that $G$ contains an odd cycle $C_n$ for some odd $n$ and that $\overline{C_n}$ must be in $L\left(H\right)$. For $n\geq7$, we have the path on $6$ vertices $P_6$ as a subgraph of $C_n$ and $\lambda_2\left(P_6\right)>1$ so by interlacing we also have $\lambda_2\left(C_n\right)>1$. Using the second of the Courant-Weyl inequalities in Theorem \ref{T:courantweyl} again, with $i=j=2$, $B=C_n$ and $A+B=K_n$ we get that $\lambda_n\left(\overline{C_n}\right)<-2$. This means that for odd $n\geq7$ $\overline{C_n}$ cannot be a line graph nor an induced subgraph of a line graph by Theorem \ref{T:linegraph}(ii). Furthermore, the complement of the cycle $C_5$ is $C_5$ itself, which has $\lambda_3>0$. The conclusion to all this is that in the complement of a line graph with $\lambda_3\leq0$ the only odd cycles we will find will be of length $3$.

We now know that $G$ contains a $K_3$ so that means that $L\left(H\right)$ contains $\overline{K_3}=3K_1$. To complete the proof we grow line graphs starting with $3K_1$, increasing the number of vertices. To grow line graphs we recall the structure in Theorem \ref{T:linegraph}(i) and consider the cliques. At each step we can either
\begin{itemize}
  \item expand a clique to the next larger one
  \item expand two non-adjacent cliques and have them share the new vertex
  \item attach a single vertex pendent path (a $K_2$) to any vertex currently in only one clique.
\end{itemize}
Once we have done all these we must then consider adding to each one an edge between any two vertices in only one clique each (in effect, adding in another $K_2$), except between any two of the original three non-adjacent vertices. At each step we discard any graphs with $\lambda_3>0$. We do not add in extra isolated vertices or grow two separate graphs (that is, start growing from one vertex of $3K_1$ and then start growing from another without connecting them) as any resulting connected graphs can be grown using the method above without giving unnecessary extra disconnected graphs along the way.  

When the growing has reached $12$ vertices we pause and have a look at the graphs we have. There are $37$ non-isomorphic line graphs on $12$ vertices that contains an induced $3K_1$ and have $\lambda_3\leq0$: $B_3\left(4\right)\cup2K_1$; $B_3\left(5\right)\cup K_1$; $B_4\left(m,n,p\right)\cup2K_1$ for the triples $\left(6,5,p\right)$, $\left(7,4,p\right)$, $\left(8,3,p\right)$, $\left(9,2,p\right)$, $\left(10,0,0\right)$ with varying appropriate $p$; and $B_4\left(m,n,p\right)\cup K_1$ for the triples $\left(6,6,p\right)$, $\left(7,5,p\right)$, $\left(8,4,p\right)$, $\left(9,3,p\right)$, $\left(10,2,p\right)$ again with varying $p$.

We know that ultimately we want a connected line graph. Any attempts to connect these graphs either by adding in edges or growing them further whilst keeping an induced $3K_1$ will result in a graph with $\lambda_3>0$ (Figure 3 in \cite{B2002} has some subgraphs with $\lambda_3>0$ that are very easy to spot in this process). Looking at the graphs with $11$ vertices or fewer we see the graphs $B_1$ and $B_2$, and all their subgraphs (with potentially some extra isolated vertices -- if the graph without them contains an induced $3K_1$ then we can safely ignore them) along with some other subgraphs of $B_3$ and $B_4$ with one or two isolated vertices but no others.
\end{proof}

This proof highlights the fact that there are only finitely many connected line graphs with $\lambda_3\leq0$ with non-bipartite complements, but infinitely many with bipartite ones. By counting the non-isomorphic graphs that appear in the growing process we see that there are in fact only $19$ connected line graphs with $\lambda_3\leq0$ and non-bipartite complement. Another (easier) way to spot this is to count the number of non-isomorphic non-bipartite induced subgraphs there are of $\overline{B_1}$ and $\overline{B_2}$; there are $24$ of these but $5$ have disconnected complements. Knowing that there are only finitely many from the original proof of Theorem \ref{T:borov} in advance helps us know that the growing process used in the proof above will actually terminate.


\begin{thebibliography}{10}

\bibitem{B2002} B.~Borovi{\'c}anin, {Line graphs with exactly two positive eigenvalues}, \emph{Publ. Inst. Math. (Beograd)}, \textbf{72(86)} (2002), 39--47.
    
\bibitem{Ca1829} A.L.~Cauchy, {Sur l'\'{e}quation \`{a} l'aide de laquelle on d\'{e}termine les in\'{e}galiti\'{e}s s\'{e}culaires des mouvements des plan\`{e}tes}, Oeuvres compl\`{e}tes, Ii\`{e}eme S\'{e}rie, \textbf{9}, 174--195, Gauthier-Villars, 1829.
    
\bibitem{C1982} D.~Cvetkovi\'{c}, {On graphs whose second largest eigenvalue does not exceed $1$}, \emph{Publ. Inst. Math. (Beograd)}, \textbf{31(45)} (1982), 15--20.

\bibitem{CRS2004} D.~Cvetkovi\'{c}, P.~Rowlinson, S.~Simi\'{c}, \emph{Spectral generalizations of line graphs: on graphs with least eigenvalue $-2$}, London Mathematical Society Lecture Note Series \textbf{314}, Cambridge University Press, Cambridge, 2004.

\bibitem{CS1974} D.~Cvetkovi\'{c}, S.~Simi\'{c}, {Some remarks on the complement of a line graph}, \emph{Publ. Inst. Math. (Beograd)}, \textbf{17(31)} (1974), 37--44.
    
\bibitem{GR2001} C.~Godsil, G.~Royle, \emph{Algebraic Graph Theory}, Springer, New York, 2001.

\bibitem{Ha1969} F.~Harary, \emph{Graph Theory}, Addison-Wesley, 1969.
    
\bibitem{PI1994} V.~Prasolov, S.~Ivanov, \emph{Problems and theorems in linear algebra}, American Mathematical Society, 1994.


\end{thebibliography}
\end{document}